\providecommand{\abs}[1]{\left\lvert#1 \right\rvert}
\newtheorem{thr}{Theorem}
\newtheorem{lem}{Lemma}
\newenvironment{rem}
{\begin{trivlist}\item[\hskip%
\labelsep{{\it \noindent Remark}}]}{\hfill
\end{trivlist}}
\newenvironment{proof}
{\begin{trivlist}\item[\hskip%
\labelsep{\it \noindent Proof.}]}{\hfill $\square$ \rm
\end{trivlist}}
\newenvironment{proofTheorem1}
{\begin{trivlist}\item[\hskip%
\labelsep{{\it \noindent Proof of Theorem 1.}}]}{\hfill $\square$
\end{trivlist}}
\newenvironment{proofTheorem2}
{\begin{trivlist}\item[\hskip%
\labelsep{{\it \noindent Proof of Theorem 2.}}]}{\hfill $\square$
\end{trivlist}}
\numberwithin{equation}{section}
\begin{document}
\begin{center}
{\huge {\bf On the spectral properties of real} \\ {\bf anti-tridiagonal Hankel matrices}} \\
\vspace{1.0cm}
{\Large João Lita da Silva\footnote{\textit{E-mail address:} \texttt{jfls@fct.unl.pt}; \texttt{joao.lita@gmail.com}}} \\
\vspace{0.1cm}
\textit{Department of Mathematics and GeoBioTec \\ Faculty of Sciences and Technology \\
NOVA University of Lisbon \\ Quinta da Torre, 2829-516 Caparica,
Portugal}
\end{center}

\bigskip

\bigskip

\begin{abstract}
In this paper we express the eigenvalues of real anti-tridiagonal Hankel matrices as the zeros of given rational functions. We still derive eigenvectors for these structured matrices at the expense of prescribed eigenvalues.
\end{abstract}

\bigskip

{\small{\textit{Key words:} Anti-tridiagonal matrix, Hankel matrix, eigenvalue, eigenvector}}

\bigskip

{\small{\textbf{2010 Mathematics Subject Classification:}
15A18, 15B05}}

\bigskip

\section{Introduction}\label{sec:1}

\indent

Recently, some authors have computed the eigenvalues and eigenvectors for a sort of Hankel matrices thus obtaining its eigendecomposition (see \cite{Gutierrez14}, \cite{Gutierrez16}, \cite{Lita16}, \cite{Rimas13a}, \cite{Rimas13b}, \cite{Wu10}, \cite{Yin08} among others). In contrast to the tridiagonal Toeplitz matrices case where the eigenvalues and eigenvectors are well-known (see, for instance, \cite{Meyer00}), a possible closed-form expression for the eigenvalues and eigenvectors of general anti-tridiagonal Hankel matrices is still to be found.

The aim of this short note is to give a contribution in that search. Specifically, we shall present an eigenvalue localization tool for real anti-tridiagonal Hankel matrices, providing also associated eigenvectors. To achieve our purpose, we shall use eigendecompositions of anti-circulant matrices available in \cite{Karner03} to ensure a decomposition for the matrices under study in a first step, and results concerning to sum of (rank one) matrices in a final stage to obtain all formulae.

The rational functions exhibited in this paper to locate eigenvalues of real anti-tridiagonal Hankel matrices, as well as the expressions for its eigenvectors, are given in explicit form which, on one hand, can be easily evaluated in computer programs, and, on the other, are useful for further theoretical investigations in this subject.

\section{Main results}\label{sec:2}

Let $n$ be a positive integer and consider the following $(n + 2) \times (n + 2)$ anti-tridiagonal Hankel matrix

\begin{equation}\label{eq:2.1}
\mathbf{H}_{n+2} = \left[
\begin{array}{ccccccc}
  0       & \ldots  & \ldots  & \ldots  & 0       & a       & c \\
  \vdots  &         &         & \iddots & a       & c       & b \\
  \vdots  &         & \iddots & \iddots & c       & b       & 0 \\
  \vdots  & \iddots & \iddots & \iddots & \iddots & \iddots & \vdots \\
  0       & a       & c       & \iddots & \iddots &         & \vdots \\
  a       & c       & b       & \iddots &         &         & \vdots \\
  c       & b       & 0       & \ldots  & \ldots  & \ldots  & 0
\end{array}
\right],
\end{equation}
where $a,b,c$ are real numbers. Throughout, we shall set
\begin{equation}\label{eq:2.2}
\omega := \mathrm{e}^{\frac{2\pi \mathrm{i}}{n + 2}},
\end{equation}
where $\mathrm{i}$ denotes the imaginary unit.

\subsection{Eigenvalue localization for $\mathbf{H}_{n+2}$}

\indent

Our first statement is an eigenvalue localization theorem for matrices of the form \eqref{eq:2.1}.


\begin{thr}\label{thr1}
Let $n$ be a positive integer, $a,b,c$ real numbers, $\omega$ given by \eqref{eq:2.2},
\begin{subequations}
\begin{gather}
\lambda_{k} := b  + a \omega^{-nk} + c \omega^{-(n + 1)k}, \quad k = 0,1,\ldots,n+1  \label{eq:2.3a} \\
\theta_{k} := \mathrm{arg}(\lambda_{k}), \quad k = 0,1,\ldots,n+1 \label{eq:2.3b} \\
F_{n}(t;\alpha,\beta) := \tfrac{1}{t - \lambda_{0}} + 2 \sum_{k=1}^{n} \left[\tfrac{\cos\left(\frac{\theta_{k}}{2} - \alpha k \right) \cos\left(\frac{\theta_{k}}{2} - \beta k \right)}{t - \abs{\lambda_{k}}} + \tfrac{\sin\left(\frac{\theta_{k}}{2} - \alpha k \right) \sin\left(\frac{\theta_{k}}{2} - \beta k \right)}{t + \abs{\lambda_{k}}} \right] \label{eq:2.3c} \\
G_{n}(t;\alpha,\beta) := \tfrac{1}{t - \lambda_{0}} + \tfrac{1}{t - \lambda_{n + 1}} + 2 \sum_{k=1}^{n} \left[\tfrac{\cos\left(\frac{\theta_{k}}{2} - \alpha k \right) \cos\left(\frac{\theta_{k}}{2} - \beta k \right)}{t - \abs{\lambda_{k}}} + \tfrac{\sin\left(\frac{\theta_{k}}{2} - \alpha k \right) \sin\left(\frac{\theta_{k}}{2} - \beta k \right)}{t + \abs{\lambda_{k}}} \right] \label{eq:2.3d}
\end{gather}
\end{subequations}

\medskip

\begin{subequations}
\noindent \textnormal{(a)} If $n$ is odd, then the eigenvalues of $\mathbf{H}_{n+2}$ that are not of the form $\lambda_{0}$, $\lvert \lambda_{k} \rvert$, $-\lvert \lambda_{k} \rvert$ $k=1,\ldots,\frac{n+1}{2}$ are precisely the zeros of the function
\begin{equation}\label{eq:2.4a}
\begin{split}
& f(t) = 1 + \tfrac{a}{n + 2} F_{\frac{n + 1}{2}} \left(t;\tfrac{2\pi}{n + 2},\tfrac{2\pi}{n + 2} \right) + \tfrac{b}{n + 2} F_{\frac{n + 1}{2}}(t;0,0) + \\
& \qquad \tfrac{ab}{(n + 2)^{2}} F_{\frac{n + 1}{2}} \left(t;\tfrac{2\pi}{n + 2}, \tfrac{2\pi}{n + 2} \right) F_{\frac{n + 1}{2}}(t;0,0) - \tfrac{ab}{(n + 2)^{2}} F_{\frac{n + 1}{2}}^{2}\left(t;0,\tfrac{2\pi}{n + 2} \right).
\end{split}
\end{equation}
Moreover, if $\mu_{1} \leqslant \mu_{2} \leqslant \ldots \leqslant \mu_{n+2}$ are the eigenvalues of $\mathbf{H}_{n+2}$ and $\lambda_{0}$, $\lvert \lambda_{k} \rvert$, $-\lvert \lambda_{k} \rvert$ $k=1,\ldots,\frac{n+1}{2}$ are arranged in non-decreasing order as $d_{1} \leqslant d_{2} \leqslant \ldots \leqslant d_{n+2}$ then
\begin{equation}\label{eq:2.4b}
d_{k} + \min\left\{0, -a, -b \right\} \leqslant \mu_{k} \leqslant d_{k} + \max\left\{0, -a, -b \right\}, \quad k=1,\ldots,n+2.
\end{equation}
\end{subequations}

\medskip

\begin{subequations}
\noindent \textnormal{(b)} If $n$ is even, then the eigenvalues of $\mathbf{H}_{n+2}$ that are not of the form $\lambda_{0}$, $\lvert \lambda_{k} \rvert$, $\lambda_{\frac{n}{2} + 1}$, $-\lvert \lambda_{k} \rvert$ $k=1,\ldots,\frac{n}{2}$ are precisely the zeros of the function
\begin{equation}\label{eq:2.5a}
\begin{split}
& g(t) = 1 + \tfrac{a}{n + 2} G_{\frac{n}{2}} \left(t;\tfrac{2\pi}{n + 2},\tfrac{2\pi}{n + 2} \right) + \tfrac{b}{n + 2} G_{\frac{n}{2}}(t;0,0) + \\
& \qquad \tfrac{ab}{(n + 2)^{2}} G_{\frac{n}{2}} \left(t;\tfrac{2\pi}{n + 2}, \tfrac{2\pi}{n + 2} \right) G_{\frac{n}{2}}(t;0,0) - \tfrac{ab}{(n + 2)^{2}} \left[G_{\frac{n}{2}}\left(t;0,\tfrac{2\pi}{n + 2} \right) - \tfrac{2}{t - \lambda_{\frac{n}{2} + 1}} \right]^{2}.
\end{split}
\end{equation}
Moreover, if $\nu_{1} \leqslant \nu_{2} \leqslant \ldots \leqslant \nu_{n+2}$ are the eigenvalues of $\mathbf{H}_{n+2}$ and $\lambda_{0}$, $\lvert \lambda_{k} \rvert$, $\lambda_{\frac{n}{2} + 1}$, $-\lvert \lambda_{k} \rvert$ $k=1,\ldots,\frac{n}{2}$ are arranged in non-decreasing order as $d_{1} \leqslant d_{2} \leqslant \ldots \leqslant d_{n+2}$ then
\begin{equation}\label{eq:2.5b}
d_{k} + \min\left\{0, -a, -b \right\} \leqslant \nu_{k} \leqslant d_{k} + \max\left\{0, -a, -b \right\}, \quad k=1,\ldots,n+2.
\end{equation}
\end{subequations}
\end{thr}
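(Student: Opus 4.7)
\medskip

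\noindent\textbf{Proof proposal.} The plan is to view $\mathbf{H}_{n+2}$ as a rank-two modification of a real symmetric anti-circulant matrix and to combine a matrix determinant lemma with the eigendecomposition of anti-circulants from~\cite{Karner03}. Filling the two zero corner positions of \eqref{eq:2.1} with a $b$ at $(1,1)$ and an $a$ at $(n+2,n+2)$ produces an anti-circulant $\mathbf{C}$ whose anti-diagonals cyclically carry the constants $b,a,c$; equivalently,
\begin{equation*}
\mathbf{H}_{n+2} = \mathbf{C} - b\,\mathbf{e}_{1}\mathbf{e}_{1}^{T} - a\,\mathbf{e}_{n+2}\mathbf{e}_{n+2}^{T}.
\end{equation*}
According to~\cite{Karner03}, the spectrum of $\mathbf{C}$ consists of $\lambda_{0}$ and the pairs $\pm|\lambda_{k}|$ for $k=1,\ldots,\lfloor(n+1)/2\rfloor$, together with the extra real value $\lambda_{n/2+1}$ when $n$ is even; these are exactly the scalars $d_{1}\leqslant\cdots\leqslant d_{n+2}$ listed in \eqref{eq:2.4b} and \eqref{eq:2.5b}.

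Next I would set $R(t):=(t\mathbf{I}-\mathbf{C})^{-1}$ and apply the matrix determinant lemma to the above rank-two update, obtaining
\begin{equation*}
\det(t\mathbf{I}-\mathbf{H}_{n+2}) = \det(t\mathbf{I}-\mathbf{C})\left[1 + b\,R_{1,1}(t) + a\,R_{n+2,n+2}(t) + ab\,R_{1,1}(t)R_{n+2,n+2}(t) - ab\,R_{1,n+2}(t)^{2}\right],
\end{equation*}
where I have used the symmetry $R_{1,n+2}=R_{n+2,1}$ coming from $\mathbf{C}=\mathbf{C}^{T}$. This factor already has exactly the shape of $f(t)$ in \eqref{eq:2.4a} and of $g(t)$ in \eqref{eq:2.5a}, so the remaining task is to identify the three scalar resolvent entries with $(n+2)^{-1}F_{(n+1)/2}(t;\cdot,\cdot)$ or $(n+2)^{-1}G_{n/2}(t;\cdot,\cdot)$ at the appropriate phase arguments.

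That identification is the heart of the argument. Expanding $R(t)=\sum_{j}(t-\mu_{j})^{-1}\mathbf{u}_{j}\mathbf{u}_{j}^{T}$ in the orthonormal real eigenbasis provided by~\cite{Karner03}, each conjugate pair $\{\lambda_{k},\overline{\lambda_{k}}\}$ contributes a two-dimensional real invariant subspace of $\mathbf{C}$ carrying eigenvalues $\pm|\lambda_{k}|$, whose eigenvectors at a coordinate $j$ have entries proportional to $\cos(\theta_{k}/2-\gamma_{j}k)$ and $\sin(\theta_{k}/2-\gamma_{j}k)$ for a phase $\gamma_{j}\in\{0,2\pi/(n+2)\}$ determined by whether $j=1$ or $j=n+2$. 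Summing the rank-one contributions over $k$ then produces the required formulas for $R_{1,1}$, $R_{n+2,n+2}$ and $R_{1,n+2}$ with the phase pairs $(0,0)$, $(2\pi/(n+2),2\pi/(n+2))$ and $(0,2\pi/(n+2))$ respectively; for even $n$ the extra real eigenvalue $\lambda_{n/2+1}$ produces the single summand $2/(t-\lambda_{n/2+1})$ that is collected into the bracket of \eqref{eq:2.5a}.

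Finally, \eqref{eq:2.4b} and \eqref{eq:2.5b} are immediate from Weyl's monotonicity theorem applied to $\mathbf{H}_{n+2}=\mathbf{C}+(\mathbf{H}_{n+2}-\mathbf{C})$: the perturbation is the diagonal matrix with spectrum $\{-a,-b,0,\ldots,0\}$, whose extremal eigenvalues are precisely $\min\{0,-a,-b\}$ and $\max\{0,-a,-b\}$. The step I expect to be the main obstacle is the resolvent identification above: one has to align Karner's normalization and sign conventions with the half-angle phases $\theta_{k}/2$ present in \eqref{eq:2.3c}--\eqref{eq:2.3d}, check that the coordinates $1$ and $n+2$ yield exactly the phases $0$ and $2\pi/(n+2)$, and carefully book-keep the exceptional eigenvalue $\lambda_{n/2+1}$ in the even-parity case.
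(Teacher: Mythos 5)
Your proposal is correct and follows the same skeleton as the paper: complete $\mathbf{H}_{n+2}$ to the anti-circulant by inserting $b$ at $(1,1)$ and $a$ at $(n+2,n+2)$ (the paper's $\mathbf{A}_{n+2}$), invoke Karner--Schneid--Ueberhuber for its real orthogonal eigendecomposition, treat the corners as a rank-two perturbation to get a secular equation, and finish the bounds with Weyl. The differences are in packaging, and they are real but equivalent. Where you apply the matrix determinant lemma to $t\mathbf{I}-\mathbf{C}$ and evaluate the three resolvent entries $R_{1,1}$, $R_{n+2,n+2}$, $R_{1,n+2}$ spectrally, the paper first conjugates by the orthogonal factor $\mathbf{P}_{n+2}$ (resp.\ $\mathbf{Q}_{n+2}$), so that $\mathbf{H}_{n+2}$ becomes diagonal minus $b\,\mathbf{x}\mathbf{x}^{\top}+a\,\mathbf{y}\mathbf{y}^{\top}$ with $\mathbf{x},\mathbf{y}$ the first and last rows of that factor, and then cites Anderson's secular-equation theorem for diagonal-plus-low-rank matrices; the two computations are identical term by term, since $R_{1,1}=\mathbf{x}^{\top}(t\mathbf{I}-\mathbf{D})^{-1}\mathbf{x}$ etc., and the ``resolvent identification'' you flag as the main obstacle is exactly the content of the paper's Lemmas 1 and 2 (the rows of $\mathbf{P}_{n+2}$, $\mathbf{Q}_{n+2}$ carry the phases $0$ and $2\pi/(n+2)$ in the half-angle form $\cos(\theta_k/2-\gamma k)$, $\sin(\theta_k/2-\gamma k)$, and the $(-1)^{k-1}$ column for even $n$ is what produces the $-2/(t-\lambda_{n/2+1})$ correction in \eqref{eq:2.5a}), so that step goes through as you expect. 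For the interlacing bounds your route is actually leaner: you apply Weyl directly to $\mathbf{C}+(\mathbf{H}_{n+2}-\mathbf{C})$, where the perturbation is literally $\mathrm{diag}(-b,0,\ldots,0,-a)$ with extreme eigenvalues $\min\{0,-a,-b\}$ and $\max\{0,-a,-b\}$, whereas the paper works in the rotated basis and must compute the spectrum of $-b\,\mathbf{x}\mathbf{x}^{\top}-a\,\mathbf{y}\mathbf{y}^{\top}$ via Miller's determinant formula together with $\mathbf{x}^{\top}\mathbf{x}=\mathbf{y}^{\top}\mathbf{y}=1$, $\mathbf{x}^{\top}\mathbf{y}=0$ --- reaching the same set $\{0,-a,-b\}$. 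In short: your version trades the citations of Anderson and Miller for the determinant lemma and a trivial diagonal observation, at the cost of having to carry out the resolvent-entry identification yourself; the paper externalizes the secular equation to Anderson's theorem but then needs the extra Miller computation.
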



\subsection{Eigenvectors of $\mathbf{H}_{n+2}$}

\indent

Owning the eigenvalues of $\mathbf{H}_{n+2}$ in \eqref{eq:2.1}, we are able to determine the corresponding eigenvectors.


\begin{thr}\label{thr2}
Let $n$ be an integer, $a,b,c$ real numbers such that $a \neq 0$, and $\lambda_{k}$, $\theta_{k}$ $(k = 0,1,\ldots,n+1)$, $F_{n}(t;\alpha,\beta)$, $G_{n}(t;\alpha,\beta)$ be given by \eqref{eq:2.3a}, \eqref{eq:2.3b}, \eqref{eq:2.3c}, \eqref{eq:2.3d}, respectively.

\medskip

\noindent \textnormal{(a)} If $n$ is odd, the zeros $\mu_{1}, \mu_{2}, \ldots, \mu_{n+2}$ of \eqref{eq:2.4a} are not of the form $\lambda_{0}$, $\lvert \lambda_{k} \rvert$, $-\lvert \lambda_{k} \rvert$, $k=1,\ldots,\frac{n+1}{2}$ and $b \, F_{\frac{n + 1}{2}}(\mu_{m},0,0) + n + 2 \neq 0$, then
\begin{equation}\label{eq:2.6}
\mathbf{u}(\mu_{m}) = \left[
\begin{array}{c}
  F_{\frac{n+1}{2}}\left(\mu_{m};0,\frac{2\pi}{n + 2} \right) - \frac{b \, F_{\frac{n + 1}{2}} \left(\mu_{m},0,\frac{2\pi}{n + 2} \right) F_{\frac{n+1}{2}}\left(\mu_{m};0,0 \right)}{b \, F_{\frac{n + 1}{2}}(\mu_{m},0,0) + n + 2} \\[10pt]
  F_{\frac{n+1}{2}}\left(\mu_{m};-\frac{2\pi}{n + 2},\frac{2\pi}{n + 2} \right) - \frac{b \, F_{\frac{n + 1}{2}} \left(\mu_{m},0,\frac{2\pi}{n + 2} \right) F_{\frac{n+1}{2}}\left(\mu_{m};-\frac{2\pi}{n + 2},0 \right)}{b \, F_{\frac{n + 1}{2}}(\mu_{m},0,0) + n + 2} \\[5pt]
  \vdots \\[10pt]
  F_{\frac{n+1}{2}}\left(\mu_{m};\frac{2(1 - k)\pi}{n + 2},\frac{2\pi}{n + 2} \right) - \frac{b \, F_{\frac{n + 1}{2}} \left(\mu_{m},0,\frac{2\pi}{n + 2} \right) F_{\frac{n+1}{2}}\left(\mu_{m};\frac{2(1 - k)\pi}{n + 2},0 \right)}{b \, F_{\frac{n + 1}{2}}(\mu_{m},0,0) + n + 2} \\[10pt]
  \vdots \\[10pt]
  F_{\frac{n+1}{2}}\left(\mu_{m};-\frac{2(n + 1)\pi}{n + 2},\frac{2\pi}{n + 2} \right) - \frac{b \, F_{\frac{n + 1}{2}} \left(\mu_{m},0,\frac{2\pi}{n + 2} \right) F_{\frac{n+1}{2}}\left(\mu_{m};-\frac{2(n + 1)\pi}{n + 2},0 \right)}{b \, F_{\frac{n + 1}{2}}(\mu_{m},0,0) + n + 2}
\end{array}
\right]
\end{equation}
is an eigenvector of $\mathbf{H}_{n+2}$ associated to $\mu_{m}$, $m=1,2,\ldots, n+2$.

\medskip

\noindent \textnormal{(b)} If $n$ is even, the zeros $\nu_{1}, \nu_{2}, \ldots, \nu_{n+2}$ of \eqref{eq:2.5a} are not of the form $\lambda_{0}$, $\lvert \lambda_{k} \rvert$, $\lambda_{\frac{n}{2} + 1}$, $-\lvert \lambda_{k} \rvert$, $k=1,\ldots,\frac{n}{2}$ and $b \, G_{\frac{n}{2}}(\nu_{m},0,0) + n + 2 \neq 0$, then
\begin{equation}\label{eq:2.7}
\begin{split}
&\mathbf{v}(\nu_{m}) = \left[
\begin{array}{@{\hspace{1pt}}c@{\hspace{1pt}}}
  G_{\frac{n}{2}}\left(\nu_{m};0,\frac{2\pi}{n + 2} \right) - \frac{2}{\nu_{m} - \lambda_{\frac{n}{2} + 1}} - \frac{b \, G_{\frac{n}{2}}\left(\nu_{m};0,0 \right) \left[G_{\frac{n}{2}}\left(\nu_{m};0,\frac{2\pi}{n + 2} \right) - \frac{2}{\nu_{m} - \lambda_{\frac{n}{2} + 1}} \right]}{b \, G_{\frac{n}{2}}(\nu_{m};0,0) + n + 2} \\[10pt]
  G_{\frac{n}{2}}\left(\nu_{m};-\frac{2\pi}{n + 2},\frac{2\pi}{n + 2} \right) - \frac{b \left[G_{\frac{n}{2}}\left(\nu_{m};-\frac{2\pi}{n + 2},0 \right) - \frac{2}{\nu_{m} - \lambda_{\frac{n}{2} + 1}} \right] \left[G_{\frac{n}{2}}\left(\nu_{m};0,\frac{2\pi}{n + 2} \right) - \frac{2}{\nu_{m} - \lambda_{\frac{n}{2} + 1}} \right]}{b \, G_{\frac{n}{2}}(\nu_{m};0,0) + n + 2} \\[5pt]
  \vdots \\[10pt]
  G_{\frac{n}{2}}\left(\nu_{m};\frac{2(1 - k)\pi}{n + 2},\frac{2\pi}{n + 2} \right) - \frac{1 - (-1)^{k}}{\nu_{m} - \lambda_{\frac{n}{2} + 1}} - \frac{b \left[G_{\frac{n}{2}}\left(\nu_{m};\frac{2(1 - k)\pi}{n + 2},0 \right) - \frac{1 + (-1)^{k}}{\nu_{m} - \lambda_{\frac{n}{2} + 1}} \right] \left[G_{\frac{n}{2}}\left(\nu_{m};0,\frac{2\pi}{n + 2} \right) - \frac{2}{\nu_{m} - \lambda_{\frac{n}{2} + 1}} \right]}{b \, G_{\frac{n}{2}}(\nu_{m};0,0) + n + 2} \\[10pt]
  \vdots \\[10pt]
  G_{\frac{n}{2}}\left(\nu_{m};-\frac{2(n + 1)\pi}{n + 2},\frac{2\pi}{n + 2} \right) - \frac{b \left[G_{\frac{n}{2}}\left(\nu_{m};-\frac{2(n + 1)\pi}{n + 2},0 \right) - \frac{2}{\nu_{m} - \lambda_{\frac{n}{2} + 1}} \right] \left[G_{\frac{n}{2}} \left(\nu_{m};0,\frac{2\pi}{n + 2} \right) - \frac{2}{\nu_{m} - \lambda_{\frac{n}{2} + 1}} \right]}{b \, G_{\frac{n}{2}}(\nu_{m};0,0) + n + 2}
\end{array}
\right]
\end{split}
\end{equation}
is an eigenvector of $\mathbf{H}_{n+2}$ associated to $\nu_{m}$, $m=1,2,\ldots, n+2$.
\end{thr}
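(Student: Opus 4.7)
The plan is to invoke the same rank-two decomposition of $\mathbf{H}_{n+2}$ already used to prove Theorem 1. Restoring the two wrap-around entries at positions $(1,1)$ and $(n+2,n+2)$ produces a real (symmetric) anti-circulant matrix $\mathbf{A}_{n+2}$ satisfying
\[
\mathbf{H}_{n+2} \;=\; \mathbf{A}_{n+2} \;-\; a\,\mathbf{e}_{n+2}\mathbf{e}_{n+2}^{\top} \;-\; b\,\mathbf{e}_{1}\mathbf{e}_{1}^{\top},
\]
whose eigenpairs are given explicitly by \cite{Karner03}. The real spectrum of $\mathbf{A}_{n+2}$ is exactly the list of values excluded in the statement: $\lambda_0$, $\pm\abs{\lambda_k}$ for $k=1,\ldots,\frac{n+1}{2}$ in case (a), together with the extra real eigenvalue $\lambda_{n/2+1}$ in case (b).

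For a zero $\mu_m$ of $f$ (respectively $\nu_m$ of $g$) that avoids the spectrum of $\mathbf{A}_{n+2}$, the resolvent $(\mu_m\mathbf{I}-\mathbf{A}_{n+2})^{-1}$ is well defined and the eigen-equation $\mathbf{H}_{n+2}\mathbf{u}=\mu_m\mathbf{u}$ rewrites as
\[
\mathbf{u} \;=\; -\,a\,y_{1}\,(\mu_m\mathbf{I}-\mathbf{A}_{n+2})^{-1}\mathbf{e}_{n+2} \;-\; b\,y_{2}\,(\mu_m\mathbf{I}-\mathbf{A}_{n+2})^{-1}\mathbf{e}_{1},
\]
with unknowns $y_{1}=\mathbf{e}_{n+2}^{\top}\mathbf{u}$ and $y_{2}=\mathbf{e}_{1}^{\top}\mathbf{u}$. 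Projecting this identity back onto $\mathbf{e}_{1}^{\top}$ and $\mathbf{e}_{n+2}^{\top}$ yields a homogeneous $2\times 2$ system for $(y_{1},y_{2})$ whose determinant is, up to a nonzero scalar, precisely $f(t)$ (or $g(t)$) of Theorem 1; because $f(\mu_m)=0$ the system has a one-dimensional kernel, so $(y_1,y_2)$ is determined up to scale. Normalising $y_1=-(n+2)/a$ (legitimate since $a\neq 0$) and solving for $y_2$ produces
\[
y_2 \;=\; \frac{(n+2)\,F_{(n+1)/2}\!\left(\mu_m;0,\tfrac{2\pi}{n+2}\right)}{b\,F_{(n+1)/2}(\mu_m;0,0)+n+2},
\]
which is where the denominator in \eqref{eq:2.6} comes from and why the non-vanishing hypothesis is needed; the analogous computation with $G_{n/2}$ gives the denominator in \eqref{eq:2.7}.

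Using the eigenbasis of $\mathbf{A}_{n+2}$ from \cite{Karner03}, each scalar $\mathbf{e}_{k}^{\top}(\mu_m\mathbf{I}-\mathbf{A}_{n+2})^{-1}\mathbf{e}_{j}$ expands as a spectral sum of pieces of type $\tfrac{\cos\cdot\cos}{\mu_m-\abs{\lambda_k}}+\tfrac{\sin\cdot\sin}{\mu_m+\abs{\lambda_k}}$ whose angular shifts are linear in the row and column indices. These are exactly the summands of $F_{(n+1)/2}$ and $G_{n/2}$ evaluated at $\alpha=\tfrac{2(1-k)\pi}{n+2}$ (for the row index $k$) and $\beta\in\{0,\tfrac{2\pi}{n+2}\}$ (for $j\in\{1,n+2\}$, using that $-\tfrac{2(n+1)\pi}{n+2}\equiv\tfrac{2\pi}{n+2}\pmod{2\pi}$); this is the precise origin of the shift $\tfrac{2(1-k)\pi}{n+2}$ in the $k$-th component of \eqref{eq:2.6}. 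Substituting the explicit $y_1,y_2$ into the displayed formula for $\mathbf{u}$ then reproduces the stated closed forms.

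The main obstacle lies in case (b). The anti-circulant $\mathbf{A}_{n+2}$ carries an extra real eigenvalue $\lambda_{n/2+1}$ with a single unpaired real eigenvector whose components alternate as $(-1)^{k-1}$; its contribution to the resolvent has to be recorded as a separate summand $\tfrac{(-1)^{j-1}(-1)^{k-1}}{\mu_m-\lambda_{n/2+1}}$, which is precisely the origin of the isolated terms $-\tfrac{2}{\nu_m-\lambda_{n/2+1}}$ (for the $\mathbf{e}_{1}$, $\mathbf{e}_{n+2}$ columns) and of the parity-dependent corrections $-\tfrac{1\pm(-1)^{k}}{\nu_m-\lambda_{n/2+1}}$ appearing in \eqref{eq:2.7}. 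Keeping the signs and indices of this parity-sensitive term consistent across all $n+2$ components, while simultaneously carrying out the rank-two update bookkeeping and the $F$-versus-$G$ relabelling, is the delicate combinatorial step; the odd-$n$ formula \eqref{eq:2.6} then drops out as a cleaner specialisation in which the $\lambda_{n/2+1}$ summand is absent.
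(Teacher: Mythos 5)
Your argument is correct and rests on the same foundation as the paper's: the Karner--Schneid--Ueberhuber eigendecomposition of the anti-circulant $\mathbf{A}_{n+2}$ together with the rank-two splitting $\mathbf{H}_{n+2}=\mathbf{A}_{n+2}-a\,\mathbf{e}_{n+2}\mathbf{e}_{n+2}^{\top}-b\,\mathbf{e}_{1}\mathbf{e}_{1}^{\top}$ (this is exactly the content of Lemmas~\ref{lem1} and~\ref{lem2}, only written in the original coordinates rather than in the diagonal frame). Where you genuinely diverge is in how the eigenvector is extracted. The paper conjugates by $\mathbf{P}_{n+2}$ (resp.\ $\mathbf{Q}_{n+2}$), regards $\mu_m\mathbf{I}-\mathrm{diag}(\cdots)+b\mathbf{x}\mathbf{x}^{\top}$ as the unperturbed symmetric matrix and $a\mathbf{y}\mathbf{y}^{\top}$ as a rank-one update, and quotes Theorem 5 of \cite{Bunch78} to get $\mathbf{w}=\xi\left[\mu_m\mathbf{I}-\mathrm{diag}(\cdots)+b\mathbf{x}\mathbf{x}^{\top}\right]^{-1}\mathbf{y}$, the denominator $b\,F_{\frac{n+1}{2}}(\mu_m;0,0)+n+2$ arising from the Sherman--Morrison inversion of that matrix. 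You instead treat the two rank-one terms symmetrically: expressing $\mathbf{u}$ through the resolvent of $\mathbf{A}_{n+2}$ and projecting onto the first and last coordinates gives a homogeneous $2\times2$ system whose determinant is exactly the secular function $f$ (resp.\ $g$) of Theorem~\ref{thr1}, and the hypothesis $b\,F_{\frac{n+1}{2}}(\mu_m;0,0)+n+2\neq0$ is precisely what forces $y_1\neq0$ in any nontrivial kernel vector, so the normalisation $y_1=-(n+2)/a$ is legitimate and, the resolvent being invertible off the excluded values, the resulting $\mathbf{u}$ is automatically nonzero. Your identifications check out: the resolvent entries are $\tfrac{1}{n+2}F_{\frac{n+1}{2}}\bigl(\mu_m;\tfrac{2(1-k)\pi}{n+2},\beta\bigr)$ with $\beta\in\bigl\{0,\tfrac{2\pi}{n+2}\bigr\}$, your $y_2$ reproduces the stated denominator, and in the even case the alternating eigenvector of $\lambda_{\frac{n}{2}+1}$ yields exactly the corrections $\tfrac{1-(-1)^{k}}{\nu_m-\lambda_{\frac{n}{2}+1}}$ and $\tfrac{1+(-1)^{k}}{\nu_m-\lambda_{\frac{n}{2}+1}}$ of \eqref{eq:2.7}, so the closed forms \eqref{eq:2.6} and \eqref{eq:2.7} follow as you describe. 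What your route buys is self-containment (no appeal to the rank-one modification theorem of \cite{Bunch78}) and a transparent link between Theorems~\ref{thr1} and~\ref{thr2}, since one and the same $2\times2$ determinant produces both the secular equation and the eigenvector; the paper's route is shorter because that eigenvector formula is taken off the shelf. In a written-up version you should just make explicit the nonvanishing of $\mathbf{u}$ (immediate from $a y_1\neq 0$) and carry out the spectral expansion of the resolvent entries, which you describe accurately but only sketch.
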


\begin{rem}
We point out that if $a = 0$ then expressions \eqref{eq:2.6} and \eqref{eq:2.7} should be replaced by
\begin{equation*}
\mathbf{u}(\mu_{m}) = \left[
\begin{array}{c}
  F_{\frac{n+1}{2}}\left(\mu_{m};0,0 \right) \\
  F_{\frac{n+1}{2}}\left(\mu_{m};-\frac{2\pi}{n + 2},0 \right) \\
  \vdots \\
  F_{\frac{n+1}{2}}\left(\mu_{m};-\frac{2(n + 1)\pi}{n + 2},0 \right)
\end{array}
\right]
\end{equation*}
and
\begin{equation*}
\mathbf{v}(\nu_{m}) = \left[
\begin{array}{c}
  G_{\frac{n}{2}}\left(\nu_{m};0,0 \right) \\
  G_{\frac{n}{2}}\left(\nu_{m};-\frac{2\pi}{n + 2},0 \right) - \frac{2}{\nu_{m} - \lambda_{\frac{n}{2} + 1}} \\
  \vdots \\
  G_{\frac{n}{2}} \left(\nu_{m};\frac{2(1 - k)\pi}{n + 2},0 \right) - \frac{1 + (-1)^{k}}{\nu_{m} - \lambda_{\frac{n}{2} + 1}} \\
  \vdots \\
  G_{\frac{n}{2}}\left(\nu_{m};-\frac{2(n + 1)\pi}{n + 2},0 \right) - \frac{2}{\nu_{m} - \lambda_{\frac{n}{2} + 1}}
\end{array}
\right]
\end{equation*}
respectively, provided that $b \neq 0$. Of course, if $a = b = 0$ then an eigenvalue decomposition of the exchange matrix is already known (see \cite{Gutierrez14}).
\end{rem}

\section{Lemmas and proofs}

Let $n$ be a positive integer. Consider the following real anti-circulant $(n+2) \times (n+2)$ matrix

\begin{equation}\label{eq:3.1}
\mathbf{A}_{n+2} := \left[
\begin{array}{ccccccc}
  b       & 0       & \ldots  & \ldots  & 0       & a       & c \\
  0       &         &         & \iddots & a       & c       & b \\
  \vdots  &         & \iddots & \iddots & c       & b       & 0 \\
  \vdots  & \iddots & \iddots & \iddots & \iddots & \iddots & \vdots \\
  0       & a       & c       & \iddots & \iddots &         & \vdots \\
  a       & c       & b       & \iddots &         &         & 0 \\
  c       & b       & 0       & \ldots  &  \ldots & 0       & a
\end{array}
\right]
\end{equation}
and the $(n+2) \times (n+2)$ unitary discrete Fourier transform matrix $\boldsymbol{\Omega}_{n+2}$, that is, the matrix defined by
\begin{equation}\label{eq:3.2}
\left[\boldsymbol{\Omega}_{n+2} \right]_{k,\ell} := \frac{\omega^{(k - 1) (\ell - 1)}}{\sqrt{n + 2}}
\end{equation}
where $\omega$ is given by \eqref{eq:2.2}. Our first auxiliary result is an orthogonal decomposition for $\eqref{eq:3.1}$. We shall denote by $\ast$ the conjugate transpose of any complex matrix.


\begin{lem}\label{lem1}
Let $n$ be a positive integer, $a,b,c$ real numbers, and $\omega$, $\lambda_{k}$, $\theta_{k}$, $k = 0,1,\ldots,n+1$ given by \eqref{eq:2.2}, \eqref{eq:2.3a}, \eqref{eq:2.3b}, respectively.

\medskip

\begin{subequations}
\noindent \textnormal{(a)} If $n$ is odd, then
\begin{equation}\label{eq:3.3a}
\mathbf{A}_{n+2} = \mathbf{P}_{n+2} \, \mathrm{diag} \left(\lambda_{0},\lvert\lambda_{1} \rvert,\ldots,\lvert \lambda_{\frac{n+1}{2}} \rvert,-\lvert\lambda_{\frac{n+1}{2}} \rvert, \ldots,-\lvert \lambda_{1} \rvert \right) \mathbf{P}_{n+2}^{\top}
\end{equation}
where $\mathbf{P}_{n+2}$ is the $(n+2) \times (n+2)$ orthogonal matrix defined by
\begin{equation}\label{eq:3.3b}
\left[\mathbf{P}_{n+2} \right]_{k,\ell} = \left\{
\begin{array}{l}
  \frac{1}{\sqrt{n + 2}}, \ \ell = 1 \\[8pt]
  \sqrt{\frac{2}{n + 2}} \cos \left[\frac{\theta_{\ell -1}}{2} + \frac{2(k - 1)(\ell - 1)\pi}{n + 2} \right], \ 1 < \ell \leqslant \frac{n + 3}{2} \\[8pt]
  \sqrt{\frac{2}{n + 2}} \sin \left[\frac{\theta_{n + 3 - \ell}}{2} + \frac{2(k - 1)(n + 3 - \ell)\pi}{n + 2} \right], \ \ell > \frac{n + 3}{2}
\end{array}
\right..
\end{equation}
\end{subequations}

\medskip

\begin{subequations}
\noindent \textnormal{(b)} If $n$ is even, then
\begin{equation}\label{eq:3.4a}
\mathbf{A}_{n+2} = \mathbf{Q}_{n+2} \mathrm{diag} \left(\lambda_{0},\lvert\lambda_{1} \rvert,\ldots,\lvert \lambda_{\frac{n}{2}} \rvert,\lambda_{\frac{n}{2} + 1},-\lvert\lambda_{\frac{n}{2}} \rvert, \ldots,-\lvert \lambda_{1} \rvert \right) \mathbf{Q}_{n+2}^{\top}
\end{equation}
where $\mathbf{Q}_{n+2}$ is the $(n+2) \times (n+2)$ orthogonal matrix whose entries are given by \begin{equation}\label{eq:3.4b}
\left[\mathbf{Q}_{n+2} \right]_{k,\ell} = \left\{
\begin{array}{l}
  \frac{1}{\sqrt{n + 2}}, \ \ell = 1 \\[8pt]
  \sqrt{\frac{2}{n + 2}} \cos \left[\frac{\theta_{\ell -1}}{2} + \frac{2(k - 1)(\ell - 1)\pi}{n + 2} \right], \ 1 < \ell \leqslant \frac{n}{2} + 1 \\[8pt]
  \frac{(-1)^{k-1}}{\sqrt{n + 2}}, \ \ell = \frac{n}{2} + 2 \\[8pt]
  \sqrt{\frac{2}{n + 2}} \sin \left[\frac{\theta_{n + 3 - \ell}}{2} + \frac{2(k - 1)(n + 3 - \ell)\pi}{n + 2} \right], \ \ell > \frac{n}{2} + 2
\end{array}
\right..
\end{equation}
\end{subequations}
\end{lem}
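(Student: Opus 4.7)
My plan is to diagonalise $\mathbf{A}_{n+2}$ over $\mathbb{C}$ via the Fourier basis and then collapse pairs of conjugate Fourier modes into a real orthogonal eigenbasis, treating separately those modes that coincide with their own conjugate.

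Observe first that $[\mathbf{A}_{n+2}]_{k,\ell} = c_{(k+\ell-2)\bmod(n+2)}$ with $c_{0}=b$, $c_{n}=a$, $c_{n+1}=c$ and all other $c_{j}$ equal to zero; thus $\mathbf{A}_{n+2}$ is a real symmetric anti-circulant. For $k=0,1,\ldots,n+1$, let $\mathbf{v}_{k}\in\mathbb{C}^{n+2}$ have entries $(\mathbf{v}_{k})_{m}=\omega^{(m-1)k}$. A short calculation using $\omega^{n+2}=1$ and the substitution $j\equiv m+\ell-2\pmod{n+2}$ gives
\[
\mathbf{A}_{n+2}\,\mathbf{v}_{k} \,=\, \bigl(b+a\omega^{nk}+c\omega^{(n+1)k}\bigr)\mathbf{v}_{-k} \,=\, \overline{\lambda_{k}}\,\mathbf{v}_{-k},
\]
with indices of $\mathbf{v}$ read modulo $n+2$. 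Hence $\mathbf{v}_{k}$ and $\mathbf{v}_{-k}$ span an $\mathbf{A}_{n+2}$-invariant plane on which the restricted action has eigenvalues $\pm\lvert\lambda_{k}\rvert$, for $1\leqslant k\leqslant (n+1)/2$ when $n$ is odd and for $1\leqslant k\leqslant n/2$ when $n$ is even.

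I would then write $\lambda_{k}=\lvert\lambda_{k}\rvert\mathrm{e}^{\mathrm{i}\theta_{k}}$ and set
\[
\mathbf{p}_{k}^{+} := \mathrm{e}^{\mathrm{i}\theta_{k}/2}\mathbf{v}_{k} + \mathrm{e}^{-\mathrm{i}\theta_{k}/2}\mathbf{v}_{-k}, \qquad \mathbf{p}_{k}^{-} := -\mathrm{i}\,\mathrm{e}^{\mathrm{i}\theta_{k}/2}\mathbf{v}_{k} + \mathrm{i}\,\mathrm{e}^{-\mathrm{i}\theta_{k}/2}\mathbf{v}_{-k}.
\]
Using $\overline{\lambda_{-k}}=\lambda_{k}$ one verifies $\mathbf{A}_{n+2}\mathbf{p}_{k}^{\pm}=\pm\lvert\lambda_{k}\rvert\mathbf{p}_{k}^{\pm}$, and expanding the exponentials componentwise yields
\[
(\mathbf{p}_{k}^{+})_{m} = 2\cos\bigl[\tfrac{\theta_{k}}{2}+\tfrac{2(m-1)k\pi}{n+2}\bigr], \quad (\mathbf{p}_{k}^{-})_{m} = 2\sin\bigl[\tfrac{\theta_{k}}{2}+\tfrac{2(m-1)k\pi}{n+2}\bigr],
\]
which, after division by $\sqrt{2(n+2)}$, are exactly the cosine and sine columns appearing in \eqref{eq:3.3b} and \eqref{eq:3.4b}. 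The self-conjugate index $k=0$ gives $\mathbf{v}_{0}=(1,\ldots,1)^{\top}$ with eigenvalue $\lambda_{0}=a+b+c$; when $n$ is even, the Nyquist index $k=n/2+1$ satisfies $\omega^{n/2+1}=-1$, so $\mathbf{v}_{n/2+1}=(1,-1,\ldots,1,-1)^{\top}$ is already real and an eigenvector with real eigenvalue $\lambda_{n/2+1}=a+b-c$. These two special modes account for the column $\ell=1$ of both $\mathbf{P}_{n+2}$ and $\mathbf{Q}_{n+2}$ and for the extra column $\ell=n/2+2$ of $\mathbf{Q}_{n+2}$.

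Orthogonality of the columns of $\mathbf{P}_{n+2}$ and $\mathbf{Q}_{n+2}$ then follows from orthogonality of the $\mathbf{v}_{k}$ across distinct Fourier indices, together with the relation $\langle\mathbf{p}_{k}^{+},\mathbf{p}_{k}^{-}\rangle=0$ inside each invariant plane; computing norms produces the stated normalisations $\sqrt{2/(n+2)}$ and $1/\sqrt{n+2}$. The delicate point in the argument is the phase bookkeeping: because $\lambda_{k}$ is defined in \eqref{eq:2.3a} with $\omega^{-nk}$ and $\omega^{-(n+1)k}$, the quantity that naturally surfaces when $\mathbf{A}_{n+2}$ acts on $\mathbf{v}_{k}$ is $\overline{\lambda_{k}}$ rather than $\lambda_{k}$, and this convention is exactly what forces the plus sign inside the cosine and sine arguments of \eqref{eq:3.3b} and \eqref{eq:3.4b}. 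Everything else reduces to routine Fourier calculation, so I expect the sign/phase check to be the main subtlety to get right.
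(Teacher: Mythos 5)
Your argument is correct, and it reaches \eqref{eq:3.3a}--\eqref{eq:3.4b} by a genuinely more self-contained route than the paper. The paper does not rederive the spectral decomposition of a real anti-circulant: it quotes Theorems 3.6 and 3.7 of Karner--Schneid--Ueberhuber, which give $\mathbf{A}_{n+2}=\mathbf{P}_{n+2}\,\mathrm{diag}(\cdot)\,\mathbf{P}_{n+2}^{\ast}$ with $\mathbf{P}_{n+2}=\boldsymbol{\Omega}_{n+2}^{\ast}\,\mathrm{blockdiag}(1,\mathbf{R}_{n+1})$ (resp. $\mathbf{S}_{n+1}$), and then the only work is to expand the columns of this product and use the identities \eqref{eq:3.7}--\eqref{eq:3.8} to see that the entries are the real cosines and sines in \eqref{eq:3.3b} and \eqref{eq:3.4b}. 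You instead prove the cited result from scratch: the computation $\mathbf{A}_{n+2}\mathbf{v}_{k}=\overline{\lambda_{k}}\,\mathbf{v}_{-k}$, the phased combinations $\mathbf{p}_{k}^{\pm}$ with coefficients $\mathrm{e}^{\pm\mathrm{i}\theta_{k}/2}$ (which is exactly what the blocks of $\mathbf{R}_{n+1}$ and $\mathbf{S}_{n+1}$ encode), the explicit check $\mathbf{A}_{n+2}\mathbf{p}_{k}^{\pm}=\pm\lvert\lambda_{k}\rvert\mathbf{p}_{k}^{\pm}$, the self-conjugate modes $k=0$ and, for even $n$, the Nyquist mode $k=\tfrac{n}{2}+1$ with eigenvalues $a+b+c$ and $a+b-c$, and DFT orthogonality plus the norms $\sqrt{2(n+2)}$ and $\sqrt{n+2}$ to get an orthogonal $\mathbf{P}_{n+2}$ (resp. $\mathbf{Q}_{n+2}$) with the stated column ordering. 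All of these steps check out (including the sign convention forced by $\omega^{-nk}$, $\omega^{-(n+1)k}$ in \eqref{eq:2.3a}, and the count $1+2\cdot\tfrac{n+1}{2}$, resp. $2+2\cdot\tfrac{n}{2}$, of columns), so what you buy is independence from the external reference at the cost of carrying out the orthogonality and norm computations the paper inherits for free. Two small points to tighten in a final write-up: replace the phrase about the invariant plane having eigenvalues $\pm\lvert\lambda_{k}\rvert$ by the explicit verification you already indicate (it also covers the harmless degenerate case $\lambda_{k}=0$, where $\theta_{k}$ is an arbitrary choice of argument), and state the concluding step explicitly, namely that the $n+2$ orthonormal columns assembled in the prescribed order give $\mathbf{A}_{n+2}\mathbf{P}_{n+2}=\mathbf{P}_{n+2}\,\mathrm{diag}(\cdot)$, hence \eqref{eq:3.3a}, and likewise for $\mathbf{Q}_{n+2}$.
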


\begin{proof}
Let $n$ be a positive odd integer. According to Theorem 3.6 of \cite{Karner03} we have
\begin{equation*}
\mathbf{A}_{n+2} = \mathbf{P}_{n+2} \, \mathrm{diag} \left(\lambda_{0},\lvert\lambda_{1} \rvert,\ldots,\lvert \lambda_{\frac{n+1}{2}} \rvert,-\lvert\lambda_{\frac{n+1}{2}} \rvert, \ldots,-\lvert \lambda_{1} \rvert \right) \mathbf{P}_{n+2}^{\ast}
\end{equation*}
where
\begin{equation*}
\mathbf{P}_{n+2} = \boldsymbol{\Omega}_{n+2}^{\ast} \left[
\begin{array}{cc}
  1 & \mathbf{0} \\
  \mathbf{0} & \mathbf{R}_{n+1}
\end{array}
\right]
\end{equation*}
with $\boldsymbol{\Omega}_{n+2}$ given by \eqref{eq:3.2} and $\mathbf{R}_{n+1}$ the following $(n+1) \times (n+1)$ matrix

\begin{equation*}
\mathbf{R}_{n+1} = \frac{\sqrt{2}}{2} \left[
\begin{array}{cccccccc}
  \mathrm{e}^{-\frac{\mathrm{i} \theta_{1}}{2}} & 0 & \ldots & 0 & 0 & \ldots & 0 & \mathrm{i} \mathrm{e}^{-\frac{\mathrm{i} \theta_{1}}{2}} \\[5pt]
  0 & \mathrm{e}^{-\frac{\mathrm{i} \theta_{2}}{2}} & \ddots & \vdots & \vdots & \iddots & \mathrm{i} \mathrm{e}^{-\frac{\mathrm{i} \theta_{2}}{2}} & 0 \\[5pt]
  \vdots & \ddots & \ddots & 0 & 0 & \iddots & \iddots & \vdots \\
  0 & \ldots & 0 & \mathrm{e}^{-\frac{\mathrm{i} \theta_{(n+1)/2}}{2}} & \mathrm{i} \mathrm{e}^{-\frac{\mathrm{i} \theta_{(n+1)/2}}{2}} & 0 & \ldots & 0 \\[5pt]
  0 & \ldots & 0 & \mathrm{e}^{\frac{\mathrm{i} \theta_{(n+1)/2}}{2}} & -\mathrm{i} \mathrm{e}^{\frac{\mathrm{i} \theta_{(n+1)/2}}{2}} & 0 & \ldots & 0 \\[5pt]
  \vdots & \iddots & \iddots & 0 & 0 & \ddots & \ddots & \vdots \\[5pt]
  0 & \mathrm{e}^{\frac{\mathrm{i} \theta_{2}}{2}} & \iddots & \vdots & \vdots & \ddots & -\mathrm{i} \mathrm{e}^{\frac{\mathrm{i} \theta_{2}}{2}} & 0 \\[5pt]
  \mathrm{e}^{\frac{\mathrm{i} \theta_{1}}{2}} & 0 & \ldots & 0 & 0 & \ldots & 0 & -\mathrm{i} \mathrm{e}^{\frac{\mathrm{i} \theta_{1}}{2}}
\end{array}
\right].
\end{equation*}
Note that the first column of $\mathbf{P}_{n+2}$ has all components equal to $1/\sqrt{n + 2}$; their next $(n + 1)/2$ columns are
\begin{equation}\label{eq:3.5}
\sqrt{\frac{2}{n + 2}} \left[
\begin{array}{c}
  \mathrm{e}^{-\frac{\mathrm{i} \theta_{\ell}}{2}} + \mathrm{e}^{\frac{\mathrm{i} \theta_{\ell}}{2}} \\
  \overline{\omega} \, \mathrm{e}^{-\frac{\mathrm{i} \theta_{\ell}}{2}} + \overline{\omega}^{(n+1)} \, \mathrm{e}^{\frac{\mathrm{i} \theta_{\ell}}{2}} \\
  \overline{\omega}^{2} \, \mathrm{e}^{-\frac{\mathrm{i} \theta_{\ell}}{2}} + \overline{\omega}^{2(n+1)} \,  \mathrm{e}^{\frac{\mathrm{i} \theta_{\ell}}{2}} \\
  \vdots \\
  \overline{\omega}^{n+1} \, \mathrm{e}^{-\frac{\mathrm{i} \theta_{\ell}}{2}} + \overline{\omega}^{(n+1)^{2}} \, \mathrm{e}^{\frac{\mathrm{i} \theta_{\ell}}{2}}
\end{array}
\right]
\end{equation}
for each $\ell = 1,\ldots,\frac{n+1}{2}$ and the last ones are
\begin{equation}\label{eq:3.6}
\sqrt{\frac{2}{n + 2}} \left[
\begin{array}{c}
  \mathrm{i} \, \mathrm{e}^{-\frac{\mathrm{i} \theta_{\ell}}{2}} - \mathrm{i} \, \mathrm{e}^{\frac{\mathrm{i} \theta_{\ell}}{2}} \\
  \mathrm{i} \, \overline{\omega} \, \mathrm{e}^{-\frac{\mathrm{i} \theta_{\ell}}{2}} - \mathrm{i} \, \overline{\omega}^{(n+1)} \, \mathrm{e}^{\frac{\mathrm{i} \theta_{\ell}}{2}} \\
  \mathrm{i} \, \overline{\omega}^{2} \, \mathrm{e}^{-\frac{\mathrm{i} \theta_{\ell}}{2}} - \mathrm{i} \, \overline{\omega}^{2(n+1)} \, \mathrm{e}^{\frac{\mathrm{i} \theta_{\ell}}{2}} \\
  \vdots \\
  \mathrm{i} \, \overline{\omega}^{n+1} \, \mathrm{e}^{-\frac{\mathrm{i} \theta_{\ell}}{2}} - \mathrm{i} \, \overline{\omega}^{(n+1)^{2}} \, \mathrm{e}^{\frac{\mathrm{i} \theta_{\ell}}{2}}
\end{array}
\right]
\end{equation}
for $\ell = \frac{n+1}{2}, \ldots, 1$. Since
\begin{gather}
\overline{\omega}^{k} \, \mathrm{e}^{-\frac{\mathrm{i} \theta_{\ell}}{2}} + \overline{\omega}^{(n+1)k} \, \mathrm{e}^{\frac{\mathrm{i} \theta_{\ell}}{2}} = \cos \left(\frac{\theta_{\ell}}{2} + \frac{2k\ell\pi}{n + 2} \right), \quad k=0,1,\ldots,n+1 \label{eq:3.7} \\[10pt]
\mathrm{i} \, \overline{\omega}^{k} \, \mathrm{e}^{-\frac{\mathrm{i} \theta_{\ell}}{2}} - \mathrm{i} \, \overline{\omega}^{(n+1)k} \, \mathrm{e}^{\frac{\mathrm{i} \theta_{\ell}}{2}} = \sin \left(\frac{\theta_{\ell}}{2} + \frac{2k\ell\pi}{n + 2} \right), \quad k=0,1,\ldots,n+1 \label{eq:3.8}
\end{gather}
we get that the entries of $\mathbf{P}_{n+2}$ are given by \eqref{eq:3.3b} which leads to \eqref{eq:3.3a}. Supposing a positive even integer $n$, Theorem 3.7 in \cite{Karner03} ensures
\begin{equation*}
\mathbf{A}_{n+2} = \mathbf{Q}_{n+2} \mathrm{diag} \left(\lambda_{0},\lvert\lambda_{1} \rvert,\ldots,\lvert \lambda_{\frac{n}{2}} \rvert,\lambda_{\frac{n}{2} + 1},-\lvert\lambda_{\frac{n}{2}} \rvert, \ldots,-\lvert \lambda_{1} \rvert \right) \mathbf{Q}_{n+2}^{\top}
\end{equation*}
where
\begin{equation*}
\mathbf{Q}_{n+2} = \boldsymbol{\Omega}_{n+2}^{\ast} \left[
\begin{array}{cc}
  1 & \mathbf{0} \\
  \mathbf{0} & \mathbf{S}_{n+1}
\end{array}
\right]
\end{equation*}
with $\boldsymbol{\Omega}_{n+2}$ given by \eqref{eq:3.2} and $\mathbf{S}_{n+1}$ the $(n+1) \times (n+1)$ matrix
\begin{equation*}
\mathbf{S}_{n+1} = \frac{\sqrt{2}}{2} \left[
\begin{array}{ccccccccc}
  \mathrm{e}^{-\frac{\mathrm{i} \theta_{1}}{2}} & 0 & \ldots & 0 & 0 & 0 & \ldots & 0 & \mathrm{i} \mathrm{e}^{-\frac{\mathrm{i} \theta_{1}}{2}} \\[5pt]
  0 & \mathrm{e}^{-\frac{\mathrm{i} \theta_{2}}{2}} & \ddots & \vdots & \vdots & \vdots & \iddots & \mathrm{i} \mathrm{e}^{-\frac{\mathrm{i} \theta_{2}}{2}} & 0 \\[5pt]
  \vdots & \ddots & \ddots & 0 & 0 & 0 & \iddots & \iddots & \vdots \\
  0 & \ldots & 0 & \mathrm{e}^{-\frac{\mathrm{i} \theta_{n/2}}{2}} & 0 & \mathrm{i} \mathrm{e}^{-\frac{\mathrm{i} \theta_{n/2}}{2}} & 0 & \ldots & 0 \\[5pt]
  0 & \ldots & 0 & 0 & \frac{2}{\sqrt{2}} & 0 & 0 & \ldots & 0 \\[5pt]
  0 & \ldots & 0 & \mathrm{e}^{\frac{\mathrm{i} \theta_{n/2}}{2}} & 0 & -\mathrm{i} \mathrm{e}^{\frac{\mathrm{i} \theta_{n/2}}{2}} & 0 & \ldots & 0 \\[5pt]
  \vdots & \iddots & \iddots & 0 & 0 & 0 & \ddots & \ddots & \vdots \\[5pt]
  0 & \mathrm{e}^{\frac{\mathrm{i} \theta_{2}}{2}} & \iddots & \vdots & \vdots & \vdots & \ddots & -\mathrm{i} \mathrm{e}^{\frac{\mathrm{i} \theta_{2}}{2}} & 0 \\[5pt]
  \mathrm{e}^{\frac{\mathrm{i} \theta_{1}}{2}} & 0 & \ldots & 0 & 0 & 0 & \ldots & 0 & -\mathrm{i} \mathrm{e}^{\frac{\mathrm{i} \theta_{1}}{2}}
\end{array}
\right].
\end{equation*}
The first column of $\mathbf{Q}_{n+2}$ has all its components equal to $1/\sqrt{n + 2}$. The next $n/2$ columns are given by \eqref{eq:3.5} for $\ell = 1,\ldots, \frac{n}{2}$ and the $n/2$ last ones are defined by \eqref{eq:3.6} for each $\ell = \frac{n}{2}, \ldots,1$; the $\left(\frac{n}{2} + 1 \right)$th column of $\mathbf{Q}_{n+2}$ is
\begin{equation*}
\sqrt{\frac{1}{n + 2}} \left[
\begin{array}{c}
  1 \\
  \overline{\omega}^{\frac{n}{2} + 1} \\
  \overline{\omega}^{2\left(\frac{n}{2} + 1 \right)} \\
  \vdots \\
  \overline{\omega}^{(n+1)\left(\frac{n}{2} + 1 \right)}
\end{array}
\right] = \sqrt{\frac{1}{n + 2}} \left[
\begin{array}{c}
  1 \\
  -1 \\
  (-1)^{2} \\
  \vdots \\
  (-1)^{n+1}
\end{array}
\right].
\end{equation*}
From identities \eqref{eq:3.7}, \eqref{eq:3.8} we obtain \eqref{eq:3.4a}. The proof is completed.
\end{proof}

The statement below is a decomposition for the matrices $\mathbf{H}_{n + 2}$ and plays a central role in the main results.


\begin{lem}\label{lem2}
Let $n$ be a positive integer, $a,b,c$ real numbers, and $\omega$, $\lambda_{k}$, $\theta_{k}$, $k = 0,1,\ldots,n+1$ given by \eqref{eq:2.2}, \eqref{eq:2.3a}, \eqref{eq:2.3b}, respectively.

\medskip

\noindent \textnormal{(a)} If $n$ is odd,

\begin{equation}\label{eq:3.9}
\mathbf{x} = \sqrt{\frac{2}{n + 2}} \left[
\begin{array}{c}
  \frac{1}{\sqrt{2}} \\[5pt]
  \cos \left(\frac{\theta_{1}}{2} \right) \\[5pt]
  \vdots \\[5pt]
  \cos \left[\frac{\theta_{(n+1)/2}}{2} \right] \\[5pt]
  \sin \left[\frac{\theta_{(n+1)/2}}{2} \right] \\[5pt]
  \vdots \\[5pt]
  \sin \left(\frac{\theta_{1}}{2} \right)
\end{array}
\right], \quad
\mathbf{y} = \sqrt{\frac{2}{n + 2}} \left[
\begin{array}{c}
  \frac{1}{\sqrt{2}} \\[5pt]
  \cos \left(\frac{\theta_{1}}{2} - \frac{2 \pi}{n + 2} \right) \\[5pt]
  \vdots \\[5pt]
  \cos \left[\frac{\theta_{(n + 1)/2}}{2} - \frac{(n + 1) \pi }{n + 2} \right] \\[5pt]
  \sin \left[\frac{\theta_{(n + 1)/2}}{2} - \frac{(n + 1) \pi }{n + 2} \right] \\[5pt]
  \vdots \\[5pt]
  \sin \left(\frac{\theta_{1}}{2} - \frac{2 \pi}{n + 2} \right)
\end{array}
\right]
\end{equation}
then
\begin{equation*}
\mathbf{H}_{n+2}  = \mathbf{P}_{n+2} \left[\mathrm{diag} \left(\lambda_{0},\lvert\lambda_{1} \rvert,\ldots,\lvert \lambda_{\frac{n+1}{2}} \rvert,-\lvert\lambda_{\frac{n+1}{2}} \rvert, \ldots,-\lvert \lambda_{1} \rvert \right) - b \mathbf{x} \mathbf{x}^{\top} - a \mathbf{y} \mathbf{y}^{\top} \right] \mathbf{P}_{n+2}^{\top}
\end{equation*}
where $\mathbf{P}_{n+2}$ is the $(n+2) \times (n+2)$ matrix defined by \eqref{eq:3.3b}.

\medskip

\noindent \textnormal{(b)} If $n$ is even,
\begin{equation}\label{eq:3.10}
\mathbf{x} = \sqrt{\frac{2}{n + 2}} \left[
\begin{array}{c}
  \frac{1}{\sqrt{2}} \\[5pt]
  \cos \left(\frac{\theta_{1}}{2} \right) \\[5pt]
  \vdots \\[5pt]
  \cos \left(\frac{\theta_{n/2}}{2} \right) \\[5pt]
  \frac{1}{\sqrt{2}} \\[5pt]
  \sin \left(\frac{\theta_{n/2}}{2} \right) \\[5pt]
  \vdots \\[5pt]
  \sin \left(\frac{\theta_{1}}{2} \right)
\end{array}
\right], \quad
\mathbf{y} = \sqrt{\frac{2}{n + 2}} \left[
\begin{array}{c}
  \frac{1}{\sqrt{2}} \\[5pt]
  \cos \left(\frac{\theta_{1}}{2} - \frac{2 \pi}{n + 2} \right) \\[5pt]
  \vdots \\[5pt]
  \cos \left(\frac{\theta_{n/2}}{2} - \frac{n \pi }{n + 2} \right) \\[5pt]
  -\frac{1}{\sqrt{2}} \\[5pt]
  \sin \left(\frac{\theta_{n/2}}{2} - \frac{n \pi }{n + 2} \right) \\[5pt]
  \vdots \\[5pt]
  \sin \left(\frac{\theta_{1}}{2} - \frac{2 \pi}{n + 2} \right)
\end{array}
\right]
\end{equation}
then
\begin{equation*}
\mathbf{H}_{n+2}  = \mathbf{Q}_{n+2}\left[\mathrm{diag} \left(\lambda_{0},\lvert\lambda_{1} \rvert,\ldots,\lvert \lambda_{\frac{n}{2}} \rvert,\lambda_{\frac{n}{2} + 1},-\lvert\lambda_{\frac{n}{2}} \rvert, \ldots,-\lvert \lambda_{1} \rvert \right) - b \mathbf{x} \mathbf{x}^{\top} - a \mathbf{y} \mathbf{y}^{\top} \right] \mathbf{Q}_{n+2}^{\top}
\end{equation*}
where $\mathbf{Q}_{n+2}$ is the $(n+2) \times (n+2)$ whose the entries are given by \eqref{eq:3.4b}.
\end{lem}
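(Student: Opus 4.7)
The plan is to exploit the fact that $\mathbf{H}_{n+2}$ differs from the anti-circulant $\mathbf{A}_{n+2}$ of \eqref{eq:3.1} only at the two diagonal corners. Inspecting the matrices entry by entry,
\[
\mathbf{H}_{n+2} \;=\; \mathbf{A}_{n+2} \;-\; b\,\mathbf{e}_{1}\mathbf{e}_{1}^{\top} \;-\; a\,\mathbf{e}_{n+2}\mathbf{e}_{n+2}^{\top},
\]
where $\mathbf{e}_{j}$ denotes the $j$-th canonical vector of $\mathbb{R}^{n+2}$. I would then substitute the factorisation from Lemma \ref{lem1}(a) and insert $\mathbf{P}_{n+2}\mathbf{P}_{n+2}^{\top} = \mathbf{I}_{n+2}$ around each rank-one correction, obtaining
\[
\mathbf{H}_{n+2} \;=\; \mathbf{P}_{n+2}\bigl[\mathrm{diag}\bigl(\lambda_{0},\lvert\lambda_{1}\rvert,\ldots,-\lvert\lambda_{1}\rvert\bigr) \,-\, b\,\mathbf{x}\mathbf{x}^{\top} \,-\, a\,\mathbf{y}\mathbf{y}^{\top}\bigr]\mathbf{P}_{n+2}^{\top},
\]
with $\mathbf{x} := \mathbf{P}_{n+2}^{\top}\mathbf{e}_{1}$ and $\mathbf{y} := \mathbf{P}_{n+2}^{\top}\mathbf{e}_{n+2}$; the same step with $\mathbf{Q}_{n+2}$ handles the even case.

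It then remains to show that these two vectors are the ones displayed in \eqref{eq:3.9}, respectively \eqref{eq:3.10}. Since $\mathbf{P}_{n+2}^{\top}\mathbf{e}_{1}$ is just the first row of $\mathbf{P}_{n+2}$ written as a column, specialising \eqref{eq:3.3b} to $k=1$ kills every angular term $\tfrac{2(k-1)(\ell-1)\pi}{n+2}$ and reproduces $\mathbf{x}$ in \eqref{eq:3.9} directly. For $\mathbf{y}$ I would read off the $(n+2)$-th row of $\mathbf{P}_{n+2}$; the key simplification is the congruence
\[
\frac{2(n+1)(\ell-1)\pi}{n+2} \;\equiv\; -\,\frac{2(\ell-1)\pi}{n+2} \pmod{2\pi},
\]
which is $\omega^{n+1}=\overline{\omega}$ translated to angles. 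Applied to both the cosine and sine blocks of \eqref{eq:3.3b}, this produces exactly the shifted entries $\cos(\tfrac{\theta_{\ell-1}}{2} - \tfrac{2(\ell-1)\pi}{n+2})$ and $\sin(\tfrac{\theta_{n+3-\ell}}{2} - \tfrac{2(n+3-\ell)\pi}{n+2})$ of $\mathbf{y}$ in \eqref{eq:3.9}.

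Part (b) is carried out in exactly the same way, using \eqref{eq:3.4b} in place of \eqref{eq:3.3b}. The only novelty is the extra central column $\ell = \tfrac{n}{2}+2$ of $\mathbf{Q}_{n+2}$, whose first entry equals $1/\sqrt{n+2}$ and whose last entry equals $(-1)^{n+1}/\sqrt{n+2} = -1/\sqrt{n+2}$ because $n$ is even. These account precisely for the components $\tfrac{1}{\sqrt{2}}$ of $\mathbf{x}$ and $-\tfrac{1}{\sqrt{2}}$ of $\mathbf{y}$ at the middle position in \eqref{eq:3.10}; all the remaining entries reduce to the odd-case computation. I expect the only real difficulty to be bookkeeping: keeping the three (respectively four) piecewise ranges of $\ell$ aligned while systematically invoking $\omega^{n+1} = \overline{\omega}$, rather than anything conceptually deeper.
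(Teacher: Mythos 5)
Your proposal is correct and follows essentially the same route as the paper: writing $\mathbf{H}_{n+2}=\mathbf{A}_{n+2}-b\,\mathbf{e}_{1}\mathbf{e}_{1}^{\top}-a\,\mathbf{e}_{n+2}\mathbf{e}_{n+2}^{\top}$ (the paper's corner matrices $\mathbf{K}_{n+2}$, $\mathbf{G}_{n+2}$), conjugating by the orthogonal factor from Lemma~\ref{lem1}, and identifying $\mathbf{x}$, $\mathbf{y}$ as the first and last rows of $\mathbf{P}_{n+2}$ (resp.\ $\mathbf{Q}_{n+2}$) via $\omega^{n+1}=\overline{\omega}$. No gaps; your explicit treatment of the middle column in the even case matches the detail the paper leaves to the reader.
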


\begin{proof}
We only prove (a) since (b) can be proven in the same way. Consider a positive odd integer $n$ and the following matrices
\begin{gather*}
    \mathbf{K}_{n+2} =
    \left[
    \begin{array}{cccc}
      b & 0 & \ldots & 0 \\
      0 & 0 & & \vdots \\
      \vdots & & \ddots & \vdots \\
      0 & \ldots & \ldots & 0
    \end{array}
    \right], \\[10pt]
    \mathbf{G}_{n+2} = \left[
    \begin{array}{cccc}
      0 & \ldots & \ldots & 0 \\
      \vdots & \ddots & & \vdots \\
      \vdots & & 0 & 0 \\
      0 & \ldots & 0 & a
    \end{array}
    \right].
\end{gather*}
From Lemma~\ref{lem1},
\begin{align*}
    \mathbf{P}_{n+2}^{\top} \mathbf{H}_{n+2} \mathbf{P}_{n+2} &= \mathbf{P}_{n+2}^{\top} \left(\mathbf{A}_{n+2} - \mathbf{K}_{n+2} - \mathbf{G}_{n+2} \right) \mathbf{P}_{n+2} \\
    &= \mathrm{diag} \left(\lambda_{0},\lvert\lambda_{1} \rvert,\ldots,\lvert \lambda_{\frac{n+1}{2}} \rvert,-\lvert\lambda_{\frac{n+1}{2}} \rvert, \ldots,-\lvert \lambda_{1} \rvert \right) - b \mathbf{x} \mathbf{x}^{\top} - a \mathbf{y} \mathbf{y}^{\top}
\end{align*}
where $\mathbf{P}_{n+2}$ is the matrix defined by \eqref{eq:3.3b}, $\mathbf{A}_{n+2}$ is the matrix \eqref{eq:3.1}, $\mathbf{x}$ is the first row of $\mathbf{P}_{n+2}$, i.e.
\begin{equation*}
\mathbf{x} =
\sqrt{\frac{2}{n + 2}} \left[
\begin{array}{c}
  \frac{1}{\sqrt{2}} \\[5pt]
  \cos \left(\frac{\theta_{1}}{2} \right) \\[5pt]
  \vdots \\[5pt]
  \cos \left[\frac{\theta_{(n+1)/2}}{2} \right] \\[5pt]
  \sin \left[\frac{\theta_{(n+1)/2}}{2} \right] \\[5pt]
  \vdots \\[5pt]
  \sin \left(\frac{\theta_{1}}{2} \right)
\end{array}
\right]
\end{equation*}
and $\mathbf{y}$ is the last row of $\mathbf{P}_{n+2}$,
\begin{equation*}
\mathbf{y} = \sqrt{\frac{2}{n + 2}} \left[
\begin{array}{c}
  \frac{1}{\sqrt{2}} \\[5pt]
  \cos \left[\frac{\theta_{1}}{2} + \frac{2 (n + 1) \pi}{n + 2} \right] \\[5pt]
  \vdots \\[5pt]
  \cos \left[\frac{\theta_{(n + 1)/2}}{2} + \frac{(n + 1)^{2}}{2} \cdot \frac{2 \pi }{n + 2} \right] \\[5pt]
  \sin \left[\frac{\theta_{(n + 1)/2}}{2} + \frac{(n + 1)^{2}}{2} \cdot \frac{2 \pi }{n + 2} \right] \\[5pt]
  \vdots \\[5pt]
  \sin \left[\frac{\theta_{1}}{2} + \frac{2(n + 1) \pi}{n + 2} \right]
\end{array}
\right] = \sqrt{\frac{2}{n + 2}} \left[
\begin{array}{c}
  \frac{1}{\sqrt{2}} \\[5pt]
  \cos \left(\frac{\theta_{1}}{2} - \frac{2 \pi}{n + 2} \right) \\[5pt]
  \vdots \\[5pt]
  \cos \left[\frac{\theta_{(n + 1)/2}}{2} - \frac{n + 1}{2} \cdot \frac{2 \pi }{n + 2} \right] \\[5pt]
  \sin \left[\frac{\theta_{(n + 1)/2}}{2} - \frac{n + 1}{2} \cdot \frac{2 \pi }{n + 2} \right] \\[5pt]
  \vdots \\[5pt]
  \sin \left(\frac{\theta_{1}}{2} - \frac{2 \pi}{n + 2} \right)
\end{array}
\right].
\end{equation*}
\end{proof}

\begin{proofTheorem1}
Consider a positive odd integer $n$, $\mathbf{x}, \mathbf{y}$ given by \eqref{eq:3.9}, and $d_{1} = \lambda_{0}, d_{2} = \lvert\lambda_{1} \rvert, \ldots,$ $d_{\frac{n+3}{2}} = \lvert\lambda_{\frac{n+1}{2}} \rvert, d_{\frac{n+5}{2}} = - \lvert\lambda_{\frac{n+1}{2}} \rvert,\ldots,d_{n+2} = - \lvert\lambda_{1} \rvert$. According to Lemma~\ref{lem2}, it should be noted that the matrix $\mathbf{H}_{n+2}$ and
\begin{equation}\label{eq:3.11}
\mathrm{diag} \left(d_{1},d_{2},\ldots,d_{n+2} \right) - b \mathbf{x} \mathbf{x}^{\top} - a \mathbf{y} \mathbf{y}^{\top}
\end{equation}
share the same eigenvalues. Let us adopt the notations of \cite{Anderson96} by denoting $\EuScript{S}(k,m)$ the collection of all $k$-element subsets of $\{1,2,\ldots,m \}$ written in increasing order; additionally, for any rectangular matrix $\mathbf{M}$, we shall indicate by $\det \left(\mathbf{M}[I,J] \right)$ the minor determined by the subsets $I = \left\{i_{1} < i_{2} < \ldots < i_{k} \right\}$ and $J = \left\{j_{1} < j_{2} < \ldots < j_{k} \right\}$. Setting

\begin{equation*}
\mathbf{X} =
\left[
\begin{array}{cc}
  -b \sqrt{\frac{2}{n + 2}} & -a \sqrt{\frac{2}{n + 2}} \\[10pt]
  -b \sqrt{\frac{2}{n + 2}} \cos \left(\frac{\theta_{1}}{2} \right) & -a \sqrt{\frac{2}{n + 2}} \cos \left(\frac{\theta_{1}}{2} - \frac{2\pi}{n + 2} \right) \\[2pt]
  \vdots & \vdots \\[5pt]
  -b \sqrt{\frac{2}{n + 2}} \cos \left[\frac{\theta_{(n+1)/2}}{2} \right] & -a \sqrt{\frac{2}{n + 2}} \cos \left[\frac{\theta_{(n+1)/2}}{2} - \frac{(n + 1)\pi}{n + 2} \right] \\[10pt]
  -b \sqrt{\frac{2}{n + 2}} \cos \left[\frac{\theta_{(n+1)/2}}{2} \right] & -a \sqrt{\frac{2}{n + 2}} \cos \left[\frac{\theta_{(n+1)/2}}{2} - \frac{(n + 1)\pi}{n + 2} \right] \\[5pt]
  \vdots & \vdots \\[5pt]
  -b \sqrt{\frac{2}{n + 2}} \cos \left(\frac{\theta_{1}}{2} \right) & -a \sqrt{\frac{2}{n + 2}} \cos \left(\frac{\theta_{1}}{2} - \frac{2\pi}{n + 2} \right)
\end{array}
\right]^{\top}
\end{equation*}
and
\begin{equation*}
\mathbf{Y} =
\left[
\begin{array}{cc}
  \sqrt{\frac{2}{n + 2}} & \sqrt{\frac{2}{n + 2}} \\[10pt]
  \sqrt{\frac{2}{n + 2}} \cos \left(\frac{\theta_{1}}{2} \right) & \sqrt{\frac{2}{n + 2}} \cos \left(\frac{\theta_{1}}{2} - \frac{2\pi}{n + 2}\right) \\[2pt]
  \vdots & \vdots \\[5pt]
  \sqrt{\frac{2}{n + 2}} \cos \left[\frac{\theta_{(n+1)/2}}{2} \right] & \sqrt{\frac{2}{n + 2}} \cos \left[\frac{\theta_{(n+1)/2}}{2} - \frac{(n + 1)\pi}{n + 2} \right] \\[10pt]
  \sqrt{\frac{2}{n + 2}} \cos \left[\frac{\theta_{(n+1)/2}}{2} \right] & \sqrt{\frac{2}{n + 2}} \cos \left[\frac{\theta_{(n+1)/2}}{2} - \frac{(n + 1)\pi}{n + 2} \right] \\[5pt]
  \vdots & \vdots \\[5pt]
  \sqrt{\frac{2}{n + 2}} \cos \left(\frac{\theta_{1}}{2} \right) & \sqrt{\frac{2}{n + 2}} \cos \left(\frac{\theta_{1}}{2} - \frac{2\pi}{n + 2} \right)
\end{array}
\right]^{\top},
\end{equation*}
we have from Theorem 1 of \cite{Anderson96} that $\zeta$ is an eigenvalue of \eqref{eq:3.11} if and only if
\begin{equation*}
1 + \sum_{k=1}^{n+2} \sum_{J \in \EuScript{S}\left(k,n+2 \right)} \sum_{I \in \EuScript{S}(k,2)} \frac{\det\left(\mathbf{X}[I,J] \right) \det\left(\mathbf{Y}[I,J] \right)}{\prod_{j \in J}(d_{j} - \zeta)} = 0
\end{equation*}
provided that $\zeta$ is not an eigenvalue of $\mathrm{diag} \left(d_{1}, \ldots,d_{n+2} \right)$. Since
\begin{equation*}
\begin{split}
1 + \sum_{k=1}^{n+2} & \sum_{J \in \EuScript{S}\left(k,n+2 \right)} \sum_{I \in \EuScript{S}(k,2)} \frac{\det\left(\mathbf{X}[I,J] \right) \det\left(\mathbf{Y}[I,J] \right)}{\prod_{j \in J}(d_{j} - \zeta)} = \\
&= 1 - b \sum_{k=1}^{n+2} \frac{\left[\mathbf{Y} \right]_{1,k}^{2}}{d_{k} - \zeta} - a \sum_{k=1}^{n+2} \frac{\left[\mathbf{Y} \right]_{2,k}^{2}}{d_{k} - \zeta} + a b \sum_{1 \leqslant k < \ell \leqslant n+2} \frac{\left(\left[\mathbf{Y} \right]_{1,k} \left[\mathbf{Y} \right]_{2,\ell} -  \left[\mathbf{Y} \right]_{1,\ell} \left[\mathbf{Y} \right]_{2,k} \right)^{2}}{(d_{k} - \zeta)(d_{\ell} - \zeta)}
\end{split}
\end{equation*}
we obtain \eqref{eq:2.4a}. Let $\mu_{1} \leqslant \mu_{2} \leqslant \ldots \leqslant \mu_{n+2}$ be the eigenvalues of $\mathbf{H}_{n+2}$ and $d_{\tau(1)} \leqslant d_{\tau(2)} \leqslant \ldots \leqslant d_{\tau(n+2)}$ be arranged in non-decreasing order by some bijection $\tau$ defined in $\{1,2, \ldots, n+2\}$. Thus,
\begin{equation}\label{eq:3.12}
\lambda_{\tau(k)} + \lambda_{\min} \left(- b \mathbf{x} \mathbf{x}^{\top} - a \mathbf{y} \mathbf{y}^{\top} \right) \leqslant \mu_{k} \leqslant \lambda_{\tau(k)} + \lambda_{\max} \left(- b \mathbf{x} \mathbf{x}^{\top} - a \mathbf{y} \mathbf{y}^{\top} \right)
\end{equation}
for each $k=1,2,\ldots,n+2$ (see \cite{Horn13}, page $242$). Using Miller's formula for the determinant of the sum of matrices (see \cite{Miller81}, page $70$), we can compute the characteristic polynomial of $- b \mathbf{x} \mathbf{x}^{\top} - a \mathbf{y} \mathbf{y}^{\top}$,
\begin{equation*}
\begin{split}
& \det \left(t \mathbf{I}_{n+2} + b \mathbf{x} \mathbf{x}^{\top} + a \mathbf{y} \mathbf{y}^{\top} \right) = \\
& \qquad = \left[1 + b t^{-1} \mathbf{x}^{\top} \mathbf{x} + a t^{-1} \mathbf{y}^{\top} \mathbf{y} + a b t^{-2} (\mathbf{x}^{\top} \mathbf{x}) (\mathbf{y}^{\top} \mathbf{y}) - a b t^{-2} (\mathbf{x}^{\top} \mathbf{y})^{2} \right] \det (t \mathbf{I}_{n+2}) \\
& \qquad = t^{n} \left[t^{2} + (a + b)t + a b \right]
\end{split}
\end{equation*}
because
\begin{gather*}
\mathbf{x}^{\top} \mathbf{x} = \frac{2}{n + 2} \left[\frac{1}{2} + \sum_{k = 1}^{\frac{n+1}{2}} \cos^{2} \left(\frac{\theta_{k}}{2} \right) + \sum_{k = 1}^{\frac{n+1}{2}} \sin^{2} \left(\frac{\theta_{k}}{2} \right) \right] = 
1, \\
\mathbf{y}^{\top} \mathbf{y} = \frac{2}{n + 2} \left[\frac{1}{2} + \sum_{k = 1}^{\frac{n+1}{2}} \cos^{2} \left(\frac{\theta_{k}}{2}  - \frac{2k\pi}{n + 2} \right) + \sum_{k = 1}^{\frac{n+1}{2}} \sin^{2} \left(\frac{\theta_{k}}{2} - \frac{2k\pi}{n + 2} \right) \right] = 1,
\end{gather*}
and
\begin{align*}
\mathbf{x}^{\top} \mathbf{y} &= \frac{2}{n + 2} \left[\frac{1}{2} + \sum_{k = 1}^{\frac{n+1}{2}} \cos \left(\frac{\theta_{k}}{2} \right) \cos \left(\frac{\theta_{k}}{2}  - \frac{2k\pi}{n + 2} \right) + \sum_{k = 1}^{\frac{n+1}{2}} \sin \left(\frac{\theta_{k}}{2} \right) \sin \left(\frac{\theta_{k}}{2} - \frac{2k\pi}{n + 2} \right) \right] \\
&= \frac{2}{n + 2} \left[\frac{1}{2} + \sum_{k = 1}^{\frac{n+1}{2}} \cos \left(\frac{2k\pi}{n + 2} \right) \right] \\
&= \frac{2}{n + 2} \left\{\frac{1}{2} - \frac{1}{2} + \frac{\sin \left[\left(\frac{n + 1}{2} + \frac{1}{2} \right) \frac{2\pi}{n + 2} \right]}{2 \sin \left(\frac{2 \pi}{n + 2} \right)} \right\} \\
&= 0.
\end{align*}
Hence, $\mathrm{Spec} \left(- b \mathbf{x} \mathbf{x}^{\top} - a \mathbf{y} \mathbf{y}^{\top} \right) = \left\{0, -a, -b \right\}$ and \eqref{eq:3.12} yields \eqref{eq:2.4b}. The proof of the remaining assertion is performed in the same way and so will be omitted.
\end{proofTheorem1}

\begin{proofTheorem2}
Since both assertions can be proven in the same way, we only prove (a). Let $n$ be a positive odd integer, $a \neq 0$, and $\mu_{1},\mu_{2},\ldots,\mu_{n+2}$ the eigenvalues of $\mathbf{H}_{n+2}$. We can rewrite the matricial equation $(\mu_{m} \mathbf{I}_{n+2} - \mathbf{H}_{n+2}) \mathbf{z} = \mathbf{0}$ as
\begin{equation}\label{eq:3.13}
\mathbf{P}_{n+2} \left[\mu_{m} \mathbf{I}_{n+2} - \mathrm{diag} \left(\lambda_{0},\lvert\lambda_{1} \rvert,\ldots,\lvert \lambda_{\frac{n+1}{2}} \rvert,-\lvert\lambda_{\frac{n+1}{2}} \rvert, \ldots,-\lvert \lambda_{1} \rvert \right) + b \mathbf{x} \mathbf{x}^{\top} + a \mathbf{y} \mathbf{y}^{\top} \right] \mathbf{P}_{n+2}^{\top} \mathbf{z} = \mathbf{0}
\end{equation}
where $\mathbf{x}, \mathbf{y}$ are defined in \eqref{eq:3.9} and $\mathbf{P}_{n+2}$ the matrix whose the entries are given by \eqref{eq:3.3b}. Thus,
\begin{gather*}
\left[\mu_{m} \mathbf{I}_{n+2} - \mathrm{diag} \left(\lambda_{0},\lvert\lambda_{1} \rvert,\ldots,\lvert \lambda_{\frac{n+1}{2}} \rvert,-\lvert\lambda_{\frac{n+1}{2}} \rvert, \ldots,-\lvert \lambda_{1} \rvert \right) + b \mathbf{x} \mathbf{x}^{\top} + a \mathbf{y} \mathbf{y}^{\top} \right] \mathbf{w} = \mathbf{0}, \\
\mathbf{w} = \mathbf{P}_{n+2}^{\top} \mathbf{z}
\end{gather*}
that is,
\begin{equation*}
\mathbf{w} = \xi \left[\mu_{m} \mathbf{I}_{n+2} - \mathrm{diag} \left(\lambda_{0},\lvert\lambda_{1} \rvert,\ldots,\lvert \lambda_{\frac{n+1}{2}} \rvert,-\lvert\lambda_{\frac{n+1}{2}} \rvert, \ldots,-\lvert \lambda_{1} \rvert \right) + b \mathbf{x} \mathbf{x}^{\top} \right]^{-1}  \mathbf{y}
\end{equation*}
for $\xi \neq 0$ (see Theorem 5 of \cite{Bunch78}, page $41$) and
\begin{equation*}
\mathbf{z} = \xi \, \mathbf{P}_{n} \left[\mu_{m} \mathbf{I}_{n+2} - \mathrm{diag} \left(\lambda_{0},\lvert\lambda_{1} \rvert,\ldots,\lvert \lambda_{\frac{n+1}{2}} \rvert,-\lvert\lambda_{\frac{n+1}{2}} \rvert, \ldots,-\lvert \lambda_{1} \rvert \right) + b \mathbf{x} \mathbf{x}^{\top} \right]^{-1}  \mathbf{y}
\end{equation*}
is a nontrivial solution of \eqref{eq:3.13}. Thus, choosing $\xi = 1$ we conclude that the vector having components \eqref{eq:2.6} is an eigenvector of $\mathbf{H}_{n+2}$ associated to the eigenvalue $\mu_{m}$.
\end{proofTheorem2}

\section*{Acknowledgements}

This work is a contribution to the Project UID/GEO/04035/2013, funded by FCT - Funda\c{c}\~{a}o para a Ci\^{e}ncia e a Tecnologia, Portugal.

\end{document}